\newtheorem{definition}{Definition}[section]
\newtheorem{lemma}{Lemma}
\newtheorem{assumption}{Assumption}
\newtheorem{remark}{Remark}
\newtheorem{problem}{Problem}
\newtheorem{proposition}{Proposition}
\title{{Nonlinear Wasserstein Distributionally Robust Optimal Control}
}
\author{
  Zhengang Zhong\\
    Imperial College London\\
  London, UK\\
  z.zhong20@imperial.ac.uk \\
   \And
  Jia-Jie Zhu \\
  Weierstrass Institute for Applied Analysis and
Stochastics \\
  Berlin, Germany\\
  zhu@wias-berlin.de \\
}
\begin{document}
\maketitle

\begin{abstract}
This paper presents a novel approach to addressing the distributionally robust nonlinear model predictive control (DRNMPC) problem. Current literature primarily focuses on the static Wasserstein distributionally robust optimal control problem with a prespecified ambiguity set of uncertain system states. Although a few studies have tackled the dynamic setting, a practical algorithm remains elusive. To bridge this gap, we introduce an DRNMPC scheme that dynamically controls the propagation of ambiguity, based on the constrained iterative linear quadratic regulator.
The theoretical results are also provided to characterize the stochastic error reachable sets under ambiguity.
We evaluate the effectiveness of our proposed iterative DRMPC algorithm by comparing the closed-loop performance of feedback and open-loop on a mass-spring system. 
Finally, we demonstrate in numerical experiments that our algorithm controls the propagated Wasserstein ambiguity.
\end{abstract}

\section{Introduction}
\subsection{Background and motivation}
Model predictive control (MPC) repeatedly solves optimization problems online based on a system model and prescribed constraints to determine optimal control actions \cite{mayne2000constrained}. However, the closed-loop performance of MPC designed based on the nominal system model could be severely deteriorated when the real system suffers from uncertainty
\cite{cannon2009model}.

To effectively develop control methods addressing the detrimental effect of uncertainty, two classes of MPC that explicitly take the uncertainty
into account have emerged: stochastic MPC (SMPC) and robust MPC (RMPC). RMPC determines the optimal control actions under
the worst-case scenario
within a pre-specified deterministic uncertainty set \cite{mayne2005robust}, whereas SMPC assumes or estimates the distribution of the uncertainty and selects the best control action for an 
objective function under soft constraints \cite{mayne2014model}. However, the performance of RMPC might be over-conservative as low-probability uncertainty is also taken into account, whereas the actual performance of SMPC could significantly deviate from the designed one due to the distribution discrepancy between the true distribution and the nominal distribution used in the controller design \cite{heirung2018stochastic}.

For the purpose of addressing the challenges mentioned above - conservativeness or misspecified nominal distribution - we consider a data-driven distributionally robust nonlinear MPC (DRNMPC) problem using the Wasserstein metric. In the construction of this controller, instead of knowing the 
probability distribution of disturbances exactly, only samples of the disturbance realizations are required
to construct the Wasserstein ambiguity set. 
The ambiguity set includes the empirical distribution of disturbance samples at its center and all distributions within a certain Wasserstein distance. Control actions are determined based on the worst-case distribution from this set, considering distributional robustness.

\begin{figure}[thpb]
  \centering
\includegraphics[width=0.4\textwidth, height = 0.2\textheight]{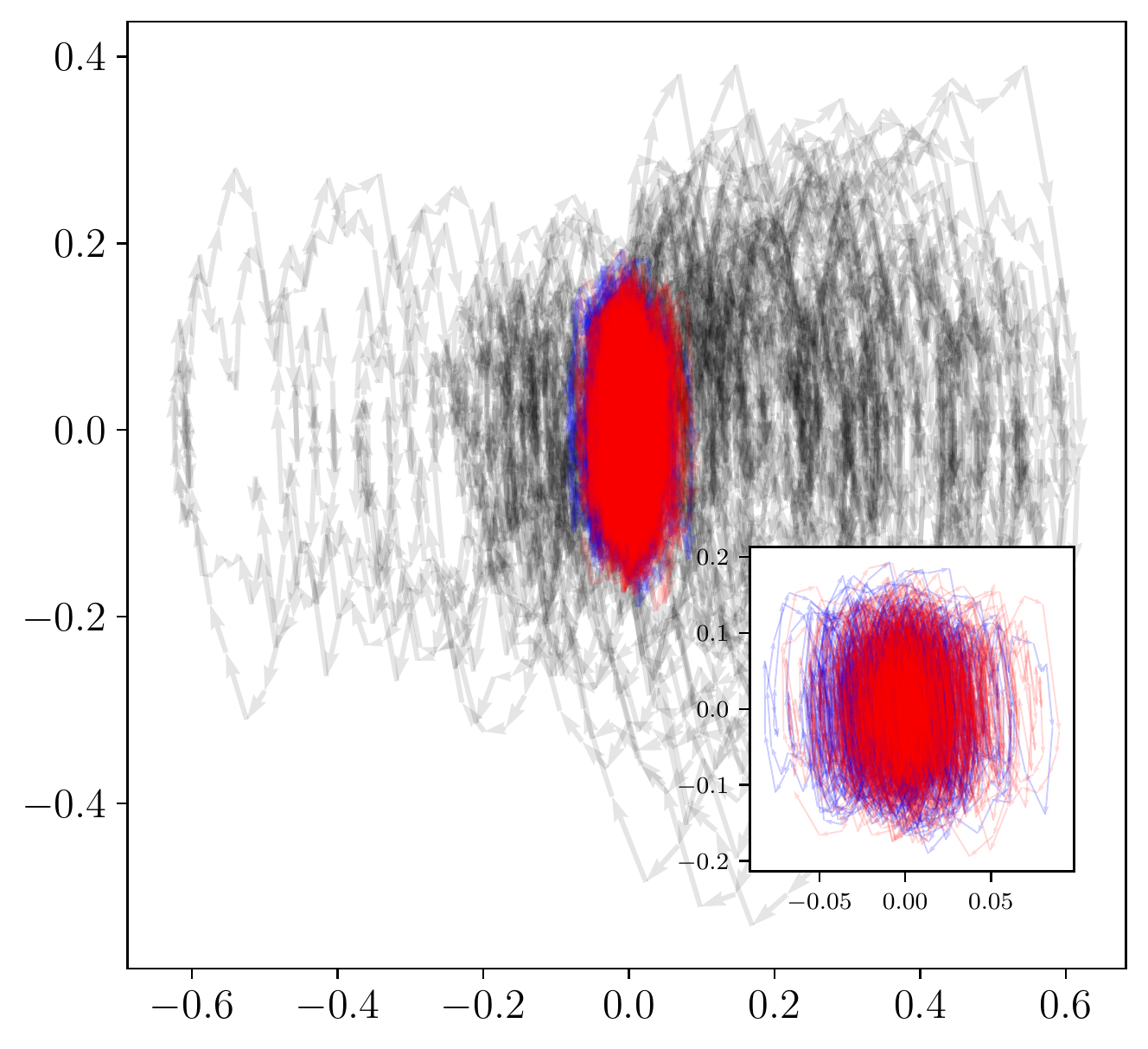}
    \caption{
    Closed-loop error dynamics (as in tube MPC) of 30 realizations with the feedback gains and nominal inputs solved by \ref{alg:1}. Red: Our method. Blue: Fixed feedback gain. Black: Zero feedback gain, i.e. open-loop. The arrow indicates the error vector between two consecutive sampling times, i.e. the tail indicates the accumulated error of all the previous steps and the head indicates the accumulated error including the error from the current step. 
    See Section~\ref{sec:err-reachable} for the theoretical characterization.} \label{fig:iterative_LQR_error_prop}
\end{figure}

\subsection{Related work}
Recently, distributionally robust control using the Wasserstein ambiguity garners a lot of interest and attention. For the purpose of state constraint satisfaction, the recent papers \cite{mark2020stochastic,  zhong2021data, coulson2021distributionally, micheli2022data, fochesato2022data} consider such a distributionally robust MPC problem with respect to the Wasserstein ambiguity set defined on the product probability space for linear systems, wherein the center of the ambiguity set is determined based i.i.d. samples of disturbance sequences. Both \cite{hakobyan2022wasserstein} and \cite{yang2020wasserstein} consider the distributionally robust control problem as a two-player zero-sum game without state constraints and solve the problem via dynamic programming with a relaxed formulation using a Wasserstein penalty. For nonlinear systems, \cite{zolanvari2022data} considers deterministic systems with disturbed constraints, and \cite{zhong2023tube} solves distributionally robust MPC for nonlinear systems with additive disturbances via iterative linearization. However, in \cite{zhong2023tube}, the propagation of the ambiguity set is not considered and the feedback gain is static for each sampling time.
After the initial submission of this manuscript, we were brought to the awareness of a recent preprint \cite{aolaritei2023capture}, which is the closest to our work. 
While both works consider the dynamic setting in terms of Wasserstein ambiguity, we directly formulate the DRO problem based on the disturbance ambiguity dynamically, which is mathematically equivalent to their propagation to the state distributions while enjoying simpler forms; cf. \eqref{eq-error-decomposition}, \eqref{eq:reform_DR_constraints}.
Furthermore, compared with \cite{aolaritei2023capture}, this paper solves DRNMPC based on iterative LQR, whereas they considered linear systems.

In this work, we consider Wasserstein distributionally robust MPC for nonlinear systems with additive disturbances. Instead of constructing the Wasserstein ambiguity set for disturbance sequences, we consider the Wasserstein ambiguity set of disturbance for the single-step dynamics and propagate the Wasserstein ambiguity sets within the prediction horizon. Also, instead of using a relaxed formulation, we solve the original DRNMPC problem via an iterative method with the help of Riccati recursion \cite{rawlings2017model}. We will show that our method could dynamically control the propagation of the Wasserstein ambiguity sets and hence guarantee a non-conservative closed-loop performance.

\subsection{Contribution}
This paper makes the following main contributions. 
1) We solve a Wasserstein distributionally robust nonlinear model predictive control (DRNMPC) problem for nonlinear systems  \eqref{eq:iLQR_prototype_prob}. 
To the best of our knowledge, this is the only work that does not assume a priori Wasserstein ambiguity sets of the state distributions for nonlinear systems.
2) We present an iterative-linearized DRMPC scheme that uses feedback to dynamically control the propagation of Wasserstein ambiguity sets, whereas open-loop control fails to do so. The derivation of such an approach is summarized in Proposition \ref{prop:1} and the corresponding algorithm is introduced in Algorithm \ref{alg:1}. This approach is a significant improvement over the existing literature, as previous research only addressed static problems with fixed ambiguity sets or dynamic problems without a practical algorithm. To the authors' best knowledge, our proposed algorithm is the first to provide a practical and efficient method for controlling the propagation of Wasserstein ambiguity sets in nonlinear dynamics.
3) We analytically characterize the Wasserstein distributional reachable set under dynamic propagation in our algorithm in Proposition \ref{prop-error-reachable-set}.
4) We visualize the closed-loop performance of the proposed approach via an error diagram in fig \ref{fig:iterative_LQR_error_prop}. We observe that our method effectively controls the propagation of the ambiguity sets.

The rest of the paper is organized as follows. In Section II, we introduce the control problem and the preliminary DRNMPC. Section III describes the Wasserstein ambiguity set applied to this work. In Section IV, we introduce the propagation of Wasserstein ambiguity sets and the corresponding algorithm dynamically controls the propagation. Also, we analyze the reachable sets of dynamic Wasserstein ambiguity and linearization error. In Section V, we provide a numerical experiment of a mass-spring system to demonstrate our method and comparison results.

\section{Problem statement}
\subsection{Notations}
We use $x_{k}$ for the measured state at time $k$ and $x_{i \mid k}$ for the state predicted $i$ steps ahead at time $k$. $[A]_{j}$ and $[a]_{j}$ denote the $j$-th row and entry of the matrix $A$ and vector $a$, respectively. Similarly, we denote the element of i-th row and j-th column in the matrix $A$ as $[A]_{ij}$. We define the notation $[A]_{i:j}$ for the i-th to j-th row in the matrix $A$. The set $\mathbb{N}_{>0}$ denotes the positive integers and $\mathbb{N}_{\geq 0}=\{0\} \cup \mathbb{N}_{>0}$. The set $\mathbb{N}_{1}^{N}$ denotes the set of integers from $1$ to $N$. $\mathcal{M}(\Xi)$ defines the space of all probability distributions supported on $\Xi$ with finite first moments. $(\cdot)^{(i)}$ denotes the i-th sample from the training set. The sequence of length $N$ of vectors $v_{0 \mid k}, \ldots, v_{N-1 \mid k}$ is denoted by $\mathbf{v}_{N \mid k}$. $\gamma_{ij}$ denotes the element of a 2-D tensor, such that this element is the i-th, j-th element along the first and second axis, respectively. Similar for 1-D $\lambda_i$. 
Let $\mathbb{B}_{\infty}^{n_x}:=\left\{d \in \mathbb{R}^{n_x} \mid\|d\|_{\infty} \leq 1\right\}$  denote the unit ball. Let $\mathbb{P}^{\otimes i} : = \mathbb{P}_0 \times \dots \times \mathbb{P}_{i-1}$ denote the product distribution.

\subsection{System dynamics, constraints and objective}
We consider the nonlinear time-invariant dynamical system with additive disturbance
\begin{equation}
\label{eq:system}
x_{k+1}=f_d(x_{k}, u_{k}) + w_{k}, \quad k \in \mathbb{N}_{\ge 0},
\end{equation}
where $f_d: \mathbb{R}^{n_x} \times \mathbb{R}^{n_u} \rightarrow \mathbb{R}^{n_x}$ is a discrete-time nonlinear dynamics, $k$ is the discrete sampling time, the state $x_{k} \in \mathbb{R}^{n_x}$, the control $u_{k} \in \mathbb{R}^{n_u}$, and the additive disturbance $w_{k} \in \mathbb{R }^{n_x}$. Each disturbance $w_k$ of the disturbance sequence $\{ w_{k} \}_{k \in \mathbb{N}_{\ge 0}}$ is assumed to be a realization of the corresponding random variable (r.v.) $W_{k}$ from the random process $\{W_{k}\}_{k \in \mathbb{N}_{\ge 0}}$ satisfying the following assumption.
\begin{assumption}[Bounded i.i.d Random Disturbance]
\label{assump:iid}
All random variables $W_k\sim\mathbb{P}_{w}$ for $k \in \mathbb{N}_{\ge 0}$ from the family of random variables $\{W_{k}\}_{k \in \mathbb{N}_{\ge 0}}$ are assumed to be zero-mean and independent and identically distributed (i.i.d) with an unknown probability distribution $\mathbb{P}_{w}$ and a known polyhedral support $\mathbb{W}_{w} \triangleq \{w \mid H_w w \le h_w\}$.
\end{assumption}


For any given state measurement $x_k$ at the sample time $k$, the predicted system states within the prediction horizon $N$ are described as 
$$
x_{i+1 \mid k}=f_d(x_{i \mid k}, u_{i \mid k})+ W_{i\mid k}, \quad x_{0 \mid k} = x_{k},
$$
where $x_{i \mid k}$, $u_{i \mid k}$, and $W_{i \mid k} := W_{k+i}$ are all random variables.

We further introduce the nonlinear dynamics $f_{d,i}: \mathbb{R}^{n_x} \times \underbrace{\mathbb{R}^{n_u} \times \dots \times \mathbb{R}^{n_u}}_{\text{i times}} \times \underbrace{\mathbb{R}^{n_x} \times \dots \times \mathbb{R}^{n_x}}_{\text{i times}} $ for the predicted state $x_{i \mid k}$ with $i \ge 1$ dependent on the measurement $x_k$, input sequence $u_{0 \mid k}, \dots, u_{i-1 \mid k}$, and disturbance sequence $W_{0 \mid k}, \dots, W_{i-1 \mid k}$
\begin{equation}
    \begin{aligned}
    x_{i \mid k} &= f_{d,i}(x_k, u_{0 \mid k}, \dots, u_{i-1 \mid k}, W_{0 \mid k}, \dots, W_{i-1 \mid k})\\
    & := f_d(f_d(\dots f_d(x_k, u_{0 \mid k})+ W_{0\mid k}  \cdots), u_{i-1 \mid k})+ W_{i-1 \mid k}.
    \end{aligned}
\end{equation}
To highlight that the predicted state is dependent on the disturbance sequence, we use a slight abuse of notation and denote 
$$
f_{d,i}(x_k, u_{0 \mid k}, \dots, u_{i-1 \mid k}, W_{0 \mid k}, \dots, W_{i-1 \mid k}) := x_{i \mid k}(W_{0 \mid k}, \dots, W_{i-1 \mid k}).
$$

For any nonlinear system, we consider distributionally robust constraints with ambiguity set propagation to the states 
\begin{equation}
\label{eq:proto_constraint}
\begin{aligned}
\sup_{\mathbb{P}_{m} \in \mathcal{P}_{k + m}, m = 0, \dots, i-1} \mathbb{E}_{\mathbb{P}^{\otimes i}}\left\{ [F]_{n}x_{i \mid k}(W_{0 \mid k}, \dots, W_{i-1 \mid k})\right\}  \leq [f]_{n}, \\
\end{aligned}
\end{equation}    
where $W_{m \mid k}\sim\mathbb{P}_{m}$ is the disturbance variable, $\mathbb{P}^{\otimes i} : = \mathbb{P}_0 \times \dots \times \mathbb{P}_{i-1}$, $k \in \mathbb{N}_{\ge 0}, n \in \mathbb{N}_1^{n_F}, i \in \mathbb{N}_1^N, F \in \mathbb{R}^{n_F \times n_x}, f \in \mathbb{R}^{n_F}$.
For each additive disturbance $W_{i\mid k}$ within the prediction horizon, we centered an ambiguity set $\mathcal{P}_{k + i}$ as the Wasserstein ball around the empirical distribution $\hat{\mathbb{P}}_{k + i}:=\frac{1}{M} \sum_{l=1}^{M} \delta_{\hat{w}_{i \mid k}^{(l)}}$. Due to the i.i.d assumption, the realization of additive disturbance is time-independent; hence, we will denote the ambiguity set as $\mathcal{P}$ and the corresponding empirical distribution as $\hat{\mathbb{P}}$ without explicitly indicating the predicted step $i$.
\begin{remark}
Through our formulation of the worst-case distributionally robust state constraints, the predicted states are affected by the accumulated error of the worst-cast distributions from each previous step within the prediction horizon. Hence, the control actions will be determined with an explicit consideration of the propagated effect of the worst distribution at each step of prediction. More details will be introduced in Section \ref{sec:prop_amb_ilqr}.
\end{remark}

Without loss of generality, we characterize the control target as tracking the equilibrium point, which we assume to be the origin of the coordinate system, from an initial state while satisfying the prespecified constraints. The control objective is hence defined as the minimization of the expected value with the reference trajectory uniformly equal to zero
\begin{equation}
\label{eq:proto_obj}
\mathbb{E}_{\mathbb{P}}\left\{\sum_{i=0}^{N-1}(\left\|x_{i \mid k}\right\|_{Q}^{2}+\left\|u_{i \mid k}\right\|_{R}^{2}) + \left\|x_{N \mid k}\right\|_{Q_f}^{2}\right\}.
\end{equation} Here $Q, Q_f \in \mathbb{R}^{n_x \times n_x}$ and $\mathbb{R}^{n_u \times n_u} $ are penalty matrices for the quadratic stage costs. The corresponding optimization problem of DRNMPC for nonlinear systems is defined as 

\begin{problem}
\label{eq:prototype_prob}
\begin{equation}
\label{eq:iLQR_prototype_prob}
\begin{array}{cl}
\displaystyle\min_{\mathbf{u}} & \mathbb{E}_{\mathbb{P}}\left\{\sum_{i=0}^{N-1}(\left\|x_{i \mid k}\right\|_{Q}^{2}+\left\|u_{i \mid k}\right\|_{R}^{2}) +\left\| x_{N \mid k}\right\|_{Q_f}^{2}\right\} \\
\text { s.t. } 
& x_{0\mid k}=x_{k}\\
& x_{i+1 \mid k}=f_d(x_{i \mid k},u_{i \mid k}) + W_{k+i}\\
    &\begin{aligned}
        \sup_{
        \mathbb{P}_{m} \in \mathcal{P}_{k + m}, m = 0, \dots, i-1
        } \mathbb{E}_{
        \mathbb{P}^{\otimes i}}\{&[F]_{n}x_{i \mid k}(W_{0 \mid k}, \dots, W_{i-1 \mid k})\} \leq [f]_{n}, \\
    \end{aligned}\\
& \forall \quad i \in \mathbb{N}_1^N, n \in \mathbb{N}_1^{n_F},  k \in \mathbb{N}_{\ge 0}.\\ 
\end{array}
\end{equation}
where $W_{m \mid k}\sim\mathbb{P}_{m}$ is the disturbance variable.
\end{problem}

\section{Distributionally robust optimization and Wasserstein Ambiguity Sets}
\label{sec:DRO_ambiguity}
Distributionally robust optimization is an optimization framework that utilizes partial information about the underlying probability distribution of the random variables in a stochastic model.
We consider the Wasserstein ambiguity set \cite{zhao2018data, mohajerin2018data} in this paper, which is modelled as a Wasserstein ball centered at a discrete empirical distribution. 
The Wasserstein metric defines the distance between all probability distributions $\mathbb{Q}$ supported on the uncertainty set $\mathbb{W}_{\xi} \in \mathbb{R}^{n_{\xi}}$ with finite $p$-moment, i.e.  $\int_{\mathbb{W}_{\xi}}\|\xi\|^{p} \mathbb{Q}(d \xi)<\infty$. 

\begin{definition}[Wasserstein Metric \cite{ambrosio2005gradient}]
\label{def:Wassertein_metric}
The \emph{Wasserstein metric} of order $p \ge 1$ is defined as $d_w: \mathcal{M}(\mathbb{W}_{\xi}) \times \mathcal{M}(\mathbb{W}_{\xi}) \rightarrow \mathbb{R}$ for all distribution $\mathbb{Q}_1, \mathbb{Q}_2 \in \mathcal{M}(\mathbb{W}_{\xi})$ and arbitrary norm on $\mathbb{R}^{n_\xi}$:
\begin{equation}
d^{p}_{w}\left(\mathbb{Q}_{1}, \mathbb{Q}_{2}\right):=
\inf_{\Pi} \int_{\mathbb{W}_{\xi}^{2}}\left\|\xi_{1}-\xi_{2}\right\|^{p} \Pi\left(\mathrm{d} \xi_{1}, \mathrm{~d} \xi_{2}\right),
\label{eq:wasserstein_metric}
\end{equation}
where $\Pi$ is a joint distribution of $\xi_{1}$ and $\xi_{2}$ with marginals $\mathbb{Q}_{1}$ and $\mathbb{Q}_{2}$ respectively.
\end{definition}

The Wasserstein metric originates from the optimal transportation problem \cite{villani2009optimal}, which studies the most efficient way to transport the mass of a distribution to another. In \eqref{eq:wasserstein_metric}, the Wasserstein distance between the distribution $\mathbb{Q}_1$ and $\mathbb{Q}_2$ can be interpreted as the minimal cost spent on the allocation if the Euclidean norm is selected and $p=2$. In the following, we will regard one distribution as the empirical distribution and the other as one of the unknown distributions which we assess whether to include or not in the ambiguity set. All these unknown distributions, whose distance from the empirical distribution is lower than a certain value in the Wasserstein sense, are included in the ambiguity set.

Specifically, we will only consider the type-1 Wasserstein metric in the remainder of this paper, i.e. $p= 1$.
In principle, it is also possible to use other $p$ values given the corresponding reformulation techniques \cite{kuhn2019wasserstein}. 
Then we could define the ambiguity set $\mathcal{P}$  centered at the empirical distribution leveraging the Wasserstein metric as
\begin{equation}
\mathbb{B}_{\varepsilon}\left(\hat{\mathbb{P}}\right):=\left\{\mathbb{Q} \in \mathcal{M}(\mathbb{W}_{\xi}): d_{w}\left(\hat{\mathbb{P}}, \mathbb{Q}\right) \leq \varepsilon\right\}
\label{eq:Wasserstein_ball}
\end{equation}
which specifies the Wasserstein ball with radius $\varepsilon>0$ around the discrete empirical probability distribution $\hat{\mathbb{P}}$. $\mathcal{M}(\mathbb{W}_{\xi})$ denotes the set of Borel probability measures on $\mathbb{W}_{\xi}$. The empirical probability distribution  $\hat{\mathbb{P}}:=\frac{1}{M} \sum_{l=1}^{M} \delta_{\hat{\xi}^{(l)}}$ is the mean of $M$ Dirac distributions which concentrates mass at the disturbance realization $\hat{\xi}^{(l)} \in \mathbb{W}_{\xi}$. We denote the training set of offline collected realizations $\xi$ as $\hat{\Xi}_{M}:=\left\{\hat{\xi}^{(l)}\right\}_{l \in \mathbb{N}_1^{M}} \subset \mathbb{W}_{\xi}$, which contains $M$ observed disturbance realizations.

The radius $\varepsilon$ determines the size of the Wasserstein ball \eqref{eq:Wasserstein_ball}, of which the size has been argued from various statistical perspective in the literature \cite{zhao2018data,rahimian2022frameworks, blanchet2019robust}
. Furthermore, as a function of the radius, the solution of this Wasserstein ambiguity-based DRO lies between the classical robust optimization and sample average approximation \cite{mohajerin2018data}.

\section{Propagation of ambiguity sets for nonlinear systems: Iterative distributionally robust LQR}
\label{sec:prop_amb_ilqr}
In this section, we propose an algorithm to solve the optimal control problem \eqref{eq:iLQR_prototype_prob} leveraging the techniques of Wasserstein ambiguity set propagation with dynamic feedback gains and iterative Linear Quadratic Regulator (iLQR) \cite{li2004iterative}.

For any given sampling time $k$, we solve the optimal control problem \eqref{eq:iLQR_prototype_prob} via sequentially deriving the linearized system and the corresponding feedback gains via Riccati recursion \cite[Sec. 8.8.3]{rawlings2017model}, solving a perturbed nominal problem, and updating the perturbations based on the solution from a DRO problem. Such an iterative method could be similarly found in, for example, \cite{messerer2021efficient}. However, our method significantly differs from the robust MPC approach \cite{messerer2021efficient} mainly in two aspects: (1) Instead of propagating the state uncertainty based on the ellipsoid support set of additive disturbances, we propagate the Wasserstein ambiguity through the dynamics. (2) We consider soft constraint satisfaction in expectation instead of robust constraint satisfaction.

\subsection{LTV formulation and error dynamics}
We first consider predicting the system dynamics with the help of tube-based MPC and a linear time-varying (LTV) error system as used in tube-based RMPC \cite{leeman2023robust}. 

We consider the predicted nonlinear dynamics \eqref{eq:system} in the form of the first-order Taylor series expansion:
\begin{equation}
\label{eq:linearized_with_error_apped2}
\begin{aligned}
x_{i + 1\mid k} & =f_d(x_{i \mid k}, u_{i \mid k}) + W_{i \mid k}\\
& = f_d(z_{i \mid k},v_{i \mid k}) + A(z_{i \mid k}, v_{i \mid k})(x_{i \mid k}-z_{i \mid k})  + B(z_{i \mid k}, v_{i \mid k})(u_{i \mid k}-v_{i \mid k})  + r(x_{i \mid k}, u_{i \mid k}, z_{i \mid k}, v_{i \mid k})+ W_{i \mid k},
\end{aligned}
\end{equation}
where $A(z_{i \mid k}, v_{i \mid k}):=\left.\frac{\partial f_d}{\partial x}\right|_{(x, u)=(z_{i \mid k}, v_{i \mid k})}$, $B(z_{i \mid k}, v_{i \mid k}):=\left.\frac{\partial f_d}{\partial u}\right|_{(x, u)=(z_{i \mid k}, v_{i \mid k})}$
and the remainder $r: \mathbb{R}^{n_{\mathrm{x}}} \times \mathbb{R}^{n_{\mathrm{u}}} \times \mathbb{R}^{n_{\mathrm{x}}} \times \mathbb{R}^{n_{\mathrm{u}}} \mapsto \mathbb{R}^{n_{\mathrm{x}}}$. 

Let $\Delta x_{i \mid k}:=x_{i \mid k}-z_{i \mid k}, \Delta u_{i \mid k}:= u_{i \mid k} - v_{i \mid k}$ denote the errors between nominal and real quantities, we have the following LTV error system
\begin{equation}
\label{eq:LTV_error_sys}
\Delta x_{i \mid k}=A_{i \mid k} \Delta x_{i \mid k}+B_{i \mid k} \Delta u_{i \mid k}+W_{i \mid k} + r_{i \mid k}, \Delta x_0=0_{n_{\mathrm{x}}},
\end{equation}
with $A_{i \mid k}:=A\left(z_{i \mid k}, v_{i \mid k}\right), B_{i \mid k}:=B\left(z_{i \mid k}, v_{i \mid k}\right), r_{i \mid k} := r(x_{i \mid k}, u,_{i \mid k} z_{i \mid k}, v_{i \mid k})$.
Furthermore, we apply the following control policy with dynamic feedback gain $K_{i \mid k}$ at each step
\begin{equation}
\label{eq:control_law}
\begin{aligned}
u_{i \mid k}=K_{i \mid k} x_{i \mid k}+c_{i \mid k},\\
\end{aligned}
\end{equation}
where $c_{i \mid k} \in \mathbb{R}^{n_u}$ are  decision variables in the optimal control problem.
Also, we have the corresponding nominal policy
\begin{equation}
\label{eq:control_law_nominal}
\begin{aligned}
v_{i \mid k}=K_{i \mid k} z_{i \mid k}+c_{i \mid k},\\
\end{aligned}
\end{equation}
Given the error dynamics \eqref{eq:LTV_error_sys}, control policy \eqref{eq:control_law} and nominal policy \eqref{eq:control_law_nominal}, we have
\begin{equation}
\begin{aligned}
\Delta x_{i+2 \mid k} &= A_{cl, i+1}\Delta x_{i+1 \mid k} + W_{i+1 \mid k} + r_{i+1 \mid k}\\
& = A_{cl, i+1}(A_{cl, i}\Delta x_{i \mid k} + W_{i \mid k} + r_{i \mid k} ) + w_{i +1 \mid k} + r_{i + 1 \mid k},
\end{aligned}
\end{equation}
where $A_{cl,i} := A_{i \mid k} + B_{i \mid k}K_{i \mid k}$.
Next, let 
$$
\begin{array}{ll}
e_{i+1 \mid k}=A_{cl, i} e_{i \mid k}+ W_{i \mid k} & e_{0 \mid k} = 0. \\
\varepsilon_{i + 1 \mid k}=A_{cl, i } \varepsilon_{i \mid k} + r_{i \mid k}& \varepsilon_{0 \mid k} = 0. \\ 
\end{array}
$$
If $x_{0 \mid k} = z_{0 \mid k}$, i.e. $\Delta x_{0 \mid k} = 0$, we have 
\begin{equation}
\label{eq:error_dynamic_append2}
    \Delta x_{i \mid k} = e_{i \mid k} + \varepsilon_{i \mid k}, \forall i \in \mathbb{N}_{0}^{N}
\end{equation}
by induction.

\begin{remark}
    In this paper, we consider three cases of feedback gains: fixed feedback with zero gain (open-loop control), fixed feedback with stabilizing gain, and feedback gain computed using iLQR. We will show that if the feedback gain is zero (i.e. open-loop control), the closed-loop performance is significantly worse than with fixed feedback gain and dynamic feedback gain as the size of the propagated ambiguity sets cannot be effectively controlled under open-loop control.
\end{remark}

\subsection{Distributionally robust nonlinear model predictive control}

Given the error dynamics \eqref{eq:error_dynamic_append2}, we consider solving an approximated version of the prototype DRNMPC problem \eqref{eq:iLQR_prototype_prob}.
In this subsection, similar to \cite{messerer2021efficient}, we consider the approximated dynamics till the first-order approximation - i.e. ignore the term of linearization error $r$ in \eqref{eq:linearized_with_error_apped2} - via 
\begin{equation}
\label{eq:approximated_dynamics}
    x_{i+1 \mid k}  \approx f_d(z_{i \mid k},v_{i \mid k}) + A_{i \mid k}(x_{i \mid k}-z_{i \mid k})
    +  B_{i \mid k}(u_{i \mid k}-v_{i \mid k})  + W_{i \mid k}.
\end{equation}
Based on the approximated linearized dynamics \eqref{eq:approximated_dynamics}, we find the following LTV error dynamics
\begin{equation}
\begin{aligned}
\Delta x_{i+1 \mid k} &\approx e_{i+1 \mid k}.\\
\end{aligned}
\end{equation}
Hence the predicted state can be formulated as 
\begin{equation}
\label{eq:predicted_state}
    x_{i+1 \mid k} \approx z_{i+1 \mid k}+A_{cl, i} e_{i \mid k}+ W_{i \mid k}.
\end{equation}
Each of the terms evolves under the feedback control \eqref{eq:control_law}
$$
\begin{array}{ll}
z_{i+1 \mid k}=f_d(z_{i \mid k}, v_{i \mid k}) & z_{0 \mid k}=x_{k} \\
e_{i+1 \mid k}=A_{cl, i } e_{i \mid k}+ W_{i \mid k} & e_{0 \mid k} = 0 \\
v_{i \mid k}=K_{i \mid k} z_{i \mid k}+c_{i\mid k}, & 
\end{array}
$$
where $v_{i \mid k}$ is the predicted nominal input. As systems' behavior is predicted within a finite prediction horizon, we let $\mathbf{c}_k=[c_{0 \mid k}^{\top}, \ldots, c_{N-1 \mid k}^{\top}]^{\top}$ and set $c_{i \mid k}=0$ for all $i \geq N$ to ensure a finite number of decision variables.
Similarly, we denote $\mathbf{v}_{k}$ and $\mathbf{z}_{k}$
for the prediction problem with the horizon $N$.

\begin{remark}
    We will also introduce the error propagation with an explicit consideration of the linearization errors in Section \ref{subset:linear_error_prop}. However, we will consider only the dynamic ambiguity propagation in this paper for the interest of practical application.
\end{remark}

Now we define the objective function for the linearized dynamics at time $k$ as $
\mathbb{E}_{\mathbb{P}} \{\sum_{i=0}^{N-1} (\|z_{i\mid k} + e_{i\mid k}\|_{Q}^{2}  + \|c_{i\mid k} + K( z_{i\mid k} + e_{i\mid k}) \|_{R}^{2}) +  \| z_{N\mid k} + e_{N\mid k}\|_{Q_f}^{2}\}$.
Here $Q, Q_f \in \mathbb{R}^{n_x \times n_x}$ and $R \in \mathbb{R}^{n_u \times n_u} $ are positive definite penalty matrices for the quadratic stage costs. Furthermore, based on the assumption of zero-mean additive disturbances, all the accumulated errors $e_{i \mid k} \forall i \in \mathbb{N}_{0}^{N}$ are also zero mean. We could hence reformulate the objective function equivalently to $\sum_{i=1}^{N-1} \| z_{i\mid k}\|_{Q}^{2}+  \| v_{i\mid k}\|_{R}^{2} +  \| z_{N \mid k}\|_{Q_f}^{2} $.

Next, we consider the closed-loop propagation of additive disturbances under closed-loop matrices $A_{cl, i}, \forall \mathbb{N}_{1}^{N}$. For expected constraints satisfaction \eqref{eq:proto_constraint}, we roll out the predicted state ($i \ge 1$) in terms of additive disturbances within the prediction horizon as
\begin{equation}
    x_{i \mid k} \approx z_{i \mid k} + e_{i \mid k} = z_{i \mid k} + \sum_{m = 0}^{i-1}\prod_{j=0}^{i-1-m} A_{cl,i-1-j}^{\min\{1,i-1-m-j\}}W_{m\mid k},
\end{equation}
where $\prod_{j=0}^{i-1-m} A_{cl,i-1-j}^{\min\{1,i-1-m-j\}} = A_{cl,i-1}\dots A_{cl,i-1-m}I$ for $m < i-1$. 
Take $x_{3 \mid k}$ as an example, it can be formulated in terms of additive disturbances via
$x_{3 \mid k } = A_{cl,2}A_{cl,1}A_{cl,0} x_{0 \mid k} + A_{cl,2} A_{cl,1} w_{0 \mid k}  + A_{cl,2}w_{1 \mid k} +  w_{2 \mid k}$.

\begin{remark}
    The relation
\begin{equation}
    \label{eq-error-decomposition}
e_{i \mid k} = \sum_{m = 0}^{i-1}\prod_{j=0}^{i-1-m} A_{cl,i-1-j}^{\min\{1,i-1-m-j\}}W_{m\mid k}
\end{equation}
is the key to our ambiguity set propagation through the (nonlinear) dynamics.
The most significant difference between our work and existing DRMPC works such as \cite{mark2020stochastic} is that we do not assume having data samples of the predicted states $\hat{x}_{i \mid k}$, which would simply reduce the optimal control problem to static Wasserstein DRO. 
However, in practice, one is often faced with the question of having to predict future state distributions and the corresponding ambiguity.
We shall demonstrate that, in such dynamic settings, the real power of feedback control is to control the size of the dynamic Wasserstein ambiguity sets in a closed-loop fashion, see Fig. \ref{fig:iterative_LQR_error_prop}.
The only previous work considering the setting equivalent to our dynamic ambiguity set propagation is \cite{yang2020wasserstein, aolaritei2023capture},
but only in the much simpler setting of linear systems. Also, in order to solve dynamic ambiguity set propagation, \cite{yang2020wasserstein} solves a relaxed problem called the Wasserstein penalty problem ( without state constraints), which still requires solving a semi-infinite problem.
The technical difficulty that prevents previous works to go beyond that simple setting lies in the very core of Wasserstein DRO reformulation techniques -- it does not treat complex nonlinear objectives as in nonlinear OCPs.
In contrast, this work proposes the first dynamic Wasserstein closed-loop DRC with nonlinear dynamics and constraints.  The Wasserstein distributional reachable set under dynamic propagation will be analytically characterized in Proposition \ref{prop-error-reachable-set}.
\end{remark}

Hence, based on the linearized dynamics, we consider an approximated optimal control problem 
corresponding to the prototype DRNMPC problem \eqref{eq:iLQR_prototype_prob}.
\begin{problem}
\label{eq:linearized_prob}
\begin{equation}
\label{eq:iLQR_linearized_prob}
\begin{array}{cl}
\displaystyle\min_{\mathbf{z}, \mathbf{v},\mathbf{K}} & \sum_{i=0}^{N-1}(\left\|z_{i \mid k}\right\|_{Q}^{2}+\left\|v_{i \mid k}\right\|_{R}^{2}) +\left\| z_{N \mid k}\right\|_{Q_f}^{2} \\
\text { s.t. } 
& z_{0\mid k}=x_{k}, \quad  z_{i+1 \mid k}=f_d(z_{i \mid k}, v_{i \mid k})\\
& v_{i \mid k}=K_{i \mid k} z_{i \mid k}+c_{i\mid k}\\
&    \begin{aligned}
     &\sup_{\mathbb{P}_{m} \in \mathcal{P}_{k + m}, m = 0, \dots, i-1} \mathbb{E}_{\mathbb{P}^{\otimes i}}\{ [F]_{n}( z_{i \mid k} + \sum_{m = 0}^{i-1}A_{cl}^{(i,j,m)}W_{m\mid k}) \} \leq [f]_{n}
     \end{aligned}\\
& \forall \quad i \in \mathbb{N}_1^N, n \in \mathbb{N}_1^{n_F},  k \in \mathbb{N}_{\ge 0},\\ 
\end{array}
\end{equation}
\end{problem}
where $W_{m \mid k}\sim\mathbb{P}_{m}$ is the disturbance variable and $A_{cl}^{(i,j,m)} := \prod_{j=0}^{i-1-m}A_{cl,i-1-j}^{\min\{1,i-1-m-j\}}$.

In the following, we will provide the exact reformulation of the optimization problem \eqref{eq:iLQR_linearized_prob}. Before showing the final reformulation, we require the following Lemma to reformulate the distributionally robust constraints.

\begin{lemma}
\label{lemma:1}
Consider the polytopic uncertainty set $\mathbb{W}_{w}$ and the Wasserstein ambiguity set $\mathcal{P}$ as the Wasserstein ball around the empirical distribution $\hat{\mathbb{P}}=\frac{1}{M} \sum_{l=1}^{M} \delta_{\hat{w}^{(l)}}$ with type-1 Wasserstein metric and ball radius $\varepsilon$. Then, the worst-case expectation 
$
\sup_{\mathbb{P}_{m} \in \mathcal{P}} \mathbb{E}_{\mathbb{P}_{m}}\left\{[F]_{n}( A_{cl}^{(i,j,m)}W_{m\mid k})\right\}
$
evaluates to 
\begin{equation}
\label{eq:DRO_constraint_tightening}
    \begin{aligned}
      \inf_{\lambda, s_{l}, \gamma_{l}} & \lambda \varepsilon + \frac{1}{N} \sum_{l=1}^{M}s_{l} \\
     & \begin{aligned}
     \text{s.t.} & \quad [F]_{n}(A_{cl}^{(i,j,m)})\hat{w}_{l} + \gamma_{l}^{\top}(h-H\hat{w}_{lm}) \le s_{l}\\
     &  \quad\|H^{\top}_{w}\gamma_{l} - \left[[F]_nA_{cl}^{(i,j,m)}\right]^{\top}\|_{*} \le \lambda\\
     & \quad\gamma_{l} \ge 0, \quad \forall  l \in \mathbb{N}_{1}^{M},
     \end{aligned}
     \end{aligned}
\end{equation}
where $\lambda \in \mathbb{R}, s_{l} \in \mathbb{R}, \gamma_{l} \in \mathbb{R}^{n_H}$, and $\|\cdot \|_{*}$ is the dual norm corresponding to the norm applied in \eqref{eq:wasserstein_metric}.
\end{lemma}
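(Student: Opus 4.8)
The plan is to recognize the worst-case expectation as the type-$1$ Wasserstein distributionally robust counterpart of the affine loss $\ell(w) := [F]_{n} A_{cl}^{(i,j,m)} w$, which is \emph{linear} in the disturbance $w$, over the nonempty polyhedral support $\mathbb{W}_{w} = \{w : H_w w \le h_w\}$. Because $\ell$ is affine and the support is a polytope, the reformulation follows the now-standard two-layer duality argument: an outer Lagrangian duality for the Wasserstein ball, followed by an inner linear-programming duality for the maximization over $\mathbb{W}_w$. I would set $a := \left[[F]_n A_{cl}^{(i,j,m)}\right]^\top$ so that $\ell(w) = a^\top w$, and keep $a$ fixed throughout since it does not depend on the optimization variables.

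First I would invoke the strong duality for Wasserstein worst-case expectations \cite{mohajerin2018data, kuhn2019wasserstein}: since $\hat{\mathbb{P}} = \frac{1}{M}\sum_{l=1}^M \delta_{\hat{w}^{(l)}}$ is discrete and $\ell$ is upper semicontinuous,
\begin{equation*}
\sup_{\mathbb{P}_m \in \mathcal{P}} \mathbb{E}_{\mathbb{P}_m}[a^\top W] = \inf_{\lambda \ge 0}\left\{ \lambda \varepsilon + \frac{1}{M}\sum_{l=1}^{M} \sup_{w \in \mathbb{W}_w}\left( a^\top w - \lambda \|w - \hat{w}^{(l)}\| \right) \right\}.
\end{equation*}
Introducing an epigraph variable $s_l$ for each inner supremum turns this into $\inf_{\lambda \ge 0,\, s_l}\{\lambda\varepsilon + \frac{1}{M}\sum_l s_l\}$ subject to $\sup_{w \in \mathbb{W}_w}(a^\top w - \lambda\|w - \hat{w}^{(l)}\|) \le s_l$, which is the skeleton of the claimed program; what remains is to render each robust constraint tractable.

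The main step is to reformulate the inner supremum. I would write $\lambda\|w - \hat{w}^{(l)}\| = \max_{\|\zeta\|_* \le \lambda} \zeta^\top(w - \hat{w}^{(l)})$ using the dual-norm representation, so that the inner problem becomes $\sup_{w \in \mathbb{W}_w}\min_{\|\zeta\|_* \le \lambda}[(a - \zeta)^\top w + \zeta^\top \hat{w}^{(l)}]$. Since the objective is bilinear and the $\zeta$-feasible set $\{\|\zeta\|_* \le \lambda\}$ is convex and compact, Sion's minimax theorem permits exchanging $\sup_w$ and $\min_\zeta$. The resulting inner maximization $\sup_{w : H_w w \le h_w}(a-\zeta)^\top w$ is a linear program over the nonempty polyhedron $\mathbb{W}_w$, whose LP dual is $\inf_{\gamma_l \ge 0 :\, H_w^\top \gamma_l = a - \zeta} h_w^\top \gamma_l$. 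Substituting $\zeta = a - H_w^\top \gamma_l$ to eliminate $\zeta$ converts the constraint $\|\zeta\|_* \le \lambda$ into $\|H_w^\top \gamma_l - a\|_* \le \lambda$ and the objective into $a^\top \hat{w}^{(l)} + \gamma_l^\top(h_w - H_w \hat{w}^{(l)})$, yielding exactly \eqref{eq:DRO_constraint_tightening}.

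The step I expect to require the most care is justifying the two exchanges of optimization order: the outer Wasserstein strong duality (which I would cite, noting the zero duality gap holds because the support is closed and the Wasserstein constraint admits a Slater-type point whenever $\varepsilon > 0$) and the inner minimax swap together with strong LP duality (which hold because $\mathbb{W}_w$ is a nonempty polytope, guaranteeing primal feasibility and hence no duality gap). The linearity of $\ell$ is what makes the reformulation \emph{exact} rather than merely an upper bound, so I would emphasize that no concavity relaxation of the loss is needed here.
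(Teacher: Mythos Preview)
Your proposal is correct and coincides with the paper's approach: the paper's proof is simply a one-line citation to \cite[Corollary~5.1]{mohajerin2018data} with $a_k := [F]_{n}A_{cl}^{(i,j,m)}$, and what you have written is precisely the standard two-step (outer Wasserstein Lagrangian duality, inner LP duality via the dual-norm/minimax argument) derivation underlying that corollary. In other words, you have spelled out the proof of the cited result rather than taken a different route.
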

\begin{proof}
The equivalent reformulation can be derived with $a_k := [F]_{n}A_{cl}^{(i,j,m)}$ in \cite[Corollary 5.1]{mohajerin2018data}.  
\end{proof}

\begin{proposition}
\label{prop:1}
Consider the polytopic uncertainty set $\mathbb{W}_{w}$. Then, the DRMPC problem \eqref{eq:iLQR_linearized_prob} evaluate to
\begin{equation}
\label{eq:optimal_control_tightening}
\begin{array}{cl}
\displaystyle\min_{\mathbf{z}, \mathbf{v},\mathbf{K}, {\lambda_{m}, s_{ml}, \gamma_{ml}}} & \sum_{i=0}^{N-1}(\left\|z_{i \mid k}\right\|_{Q}^{2}+\left\|v_{i \mid k}\right\|_{R}^{2}) +\left\| z_{N \mid k}\right\|_{Q_f}^{2} \\
\text { s.t. } 
& z_{0\mid k}=x_{k} \\
&  z_{i+1 \mid k}=f_d(z_{i \mid k}, v_{i \mid k}) \\
& v_{i \mid k}=K_{i\mid k} z_{i \mid k}+c_{i\mid k}\\
&   \sum_{m = 0}^{i-1}  \lambda \varepsilon + \frac{1}{N} \sum_{l=1}^{M}s_{ml} 
\leq [f]_{n} -  [F]_{n}z_{i \mid k}\\
     &   [F]_{n}A_{cl}^{(i,j,m)}\hat{w}_{l} + \gamma_{l}^{\top}(h-H\hat{w}_{l}) \le s_{ml}\\
     &  \|H^{\top}_{w}\gamma_{l} - [[F]_n A_{cl}^{(i,j,m)} ]^{\top}\|_{*} \le \lambda_{m}\\
     & \gamma_{ml} \ge 0, \quad \forall l \in \mathbb{N}_{1}^{M}, \forall m \in \mathbb{N}_{0}^{i-1}\\
& \forall \quad i \in \mathbb{N}_1^N, n \in \mathbb{N}_1^{n_F},  k \in \mathbb{N}_{\ge 0}.\\ 
\end{array}
\end{equation}
\end{proposition}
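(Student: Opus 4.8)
The plan is to turn each semi-infinite distributionally robust (DR) constraint of Problem~\eqref{eq:iLQR_linearized_prob} into a finite family of convex constraints in three moves: split off the deterministic nominal term, decompose the worst-case expectation over the product ambiguity set into a sum of single-step worst-case expectations, and then apply the duality of Lemma~\ref{lemma:1} term by term while lifting the resulting inner infima into the outer minimization. The cost and the equality constraints (nominal dynamics and the control law $v_{i\mid k}=K_{i\mid k}z_{i\mid k}+c_{i\mid k}$) are untouched, so the whole argument concerns a single constraint for fixed $i,n,k$.

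First I would separate the deterministic part. Since $z_{i\mid k}$ does not depend on the disturbances, linearity of expectation under the product law $\mathbb{P}^{\otimes i}=\mathbb{P}_0\times\cdots\times\mathbb{P}_{i-1}$, whose $m$-th marginal is $\mathbb{P}_m$, gives
\begin{equation}
\mathbb{E}_{\mathbb{P}^{\otimes i}}\Bigl\{[F]_n\bigl(z_{i\mid k}+\textstyle\sum_{m=0}^{i-1}A_{cl}^{(i,j,m)}W_{m\mid k}\bigr)\Bigr\}
=[F]_n z_{i\mid k}+\sum_{m=0}^{i-1}\mathbb{E}_{\mathbb{P}_m}\bigl\{[F]_n A_{cl}^{(i,j,m)}W_{m\mid k}\bigr\}.
\end{equation}

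The crux is the worst-case decomposition. Because the stochastic remainder is additively separable across $m$ and each $\mathbb{P}_m$ ranges over its own copy of $\mathcal{P}$ independently of the others (a Cartesian product of feasible sets), the supremum of a separable objective over a product domain equals the sum of the marginal suprema,
\begin{equation}
\sup_{\mathbb{P}_m\in\mathcal{P},\ m=0,\dots,i-1}\ \sum_{m=0}^{i-1}\mathbb{E}_{\mathbb{P}_m}\bigl\{[F]_n A_{cl}^{(i,j,m)}W_{m\mid k}\bigr\}
=\sum_{m=0}^{i-1}\ \sup_{\mathbb{P}_m\in\mathcal{P}}\mathbb{E}_{\mathbb{P}_m}\bigl\{[F]_n A_{cl}^{(i,j,m)}W_{m\mid k}\bigr\}.
\end{equation}
This is the step I expect to be the main obstacle to justify cleanly: it is exactly where the i.i.d.\ product structure of the propagated ambiguity and the linearity of the rolled-out error \eqref{eq-error-decomposition} are essential. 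The ``$\le$'' direction follows from bounding every joint choice by the sum of marginal suprema, and the ``$\ge$'' direction from choosing each $\mathbb{P}_m$ near-optimally and independently; both use that there is no coupling between the $\mathbb{P}_m$.

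Finally I would invoke Lemma~\ref{lemma:1} on each of the $i$ marginal worst-case expectations with $a_k:=[F]_n A_{cl}^{(i,j,m)}$, producing for each $m$ the inner infimum over $(\lambda_m,s_{ml},\gamma_{ml})$ subject to the dual feasibility constraints of \eqref{eq:DRO_constraint_tightening}. Requiring the left-hand side above to be at most $[f]_n$ and moving $[F]_n z_{i\mid k}$ to the right then yields a constraint of the form $\sum_{m}\inf(\cdots)\le [f]_n-[F]_n z_{i\mid k}$. Since an infimum-valued constraint $\inf_x g(x)\le c$ is equivalent to the existence of a feasible $x$ with $g(x)\le c$, I would promote the dual variables $\lambda_m,s_{ml},\gamma_{ml}$ to decision variables of the outer problem, imposing the per-term dual feasibility constraints alongside the aggregated budget constraint. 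Collecting these over all $i\in\mathbb{N}_1^N$, $n\in\mathbb{N}_1^{n_F}$, $m\in\mathbb{N}_0^{i-1}$, $l\in\mathbb{N}_1^M$ reproduces exactly Problem~\eqref{eq:optimal_control_tightening}.
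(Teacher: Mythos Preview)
Your proposal is correct and follows essentially the same route as the paper: split off the deterministic nominal term by linearity, decouple the supremum over the product ambiguity set into a sum of single-step worst-case expectations, apply Lemma~\ref{lemma:1} termwise, and then lift the dual variables into the outer program via the feasible-set equivalence. If anything, you justify the sup--sum interchange and the $\inf\le c\Leftrightarrow\exists$ lifting more explicitly than the paper does.
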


\begin{proof}
    The reformulation  \eqref{eq:optimal_control_tightening} is the consequence of the exact reformulation of the distributionally robust constraints. For any given $i$ and $k$, based on the linearity property of expectation,  the distributionally robust constraints $\sup_{r_{m\mid k} , \mathbb{P}_0 \in \mathcal{P},\dots,\mathbb{P}_{i-1} \in \mathcal{P}} \mathbb{E}_{\mathbb{P}^{\otimes i}}\{ [F]_{n}( z_{i \mid k} + \sum_{m = 0}^{i-1}A_{cl}^{(i,j,m)} W_{m\mid k}) \} \leq [f]_{n}$
    is  equivalent to 
    \begin{equation}
    \label{eq:reform_DR_constraints}
\begin{aligned}
 & \sum_{m = 0}^{i-1}\sup_{\mathbb{P}_m \in \mathcal{P}}  \mathbb{E}_{\mathbb{P}_{m}} \left\{[F]_{n}( A_{cl}^{(i,j,m)}W_{m\mid k})\right\} + [F]_{n}z_{i \mid k} \le [f]_n. 
 \end{aligned}
 \end{equation}


Let $\mathbb{Z}_{f} := \{\mathbf{z} \mid \mathbf{z} \text{ is feasible in } \eqref{eq:reform_DR_constraints}\ \forall i \in \mathbb{N}_{1}^{N}\}$.
By applying Lemma \ref{lemma:1} to $
\sup_{\mathbb{P}_{m} \in \mathcal{P}} \mathbb{E}_{\mathbb{P}_{m}}\left\{[F]_{n}( A_{cl}^{(i,j,m)} W_{m\mid k})\right\}
$ for each $m$, we acquire that the inequality \eqref{eq:reform_DR_constraints} containing the summation of distributionally robust optimizations is equivalent to
 \begin{equation}
 \label{eq:multi_DRO_after_reform}
 \begin{aligned}
 \displaystyle  \sum_{m = 0}^{i-1}  & \inf_{\lambda_{m}, s_{ml}, \gamma_{ml}} \lambda \varepsilon + \frac{1}{N} \sum_{l=1}^{M}s_{ml} \le [f]_{n} - [F]_{n} z_{i \mid k}\\
   & \quad \quad\begin{aligned}
     \text{s.t.}\quad &   [F]_{n}A_{cl}^{(i,j,m)}\hat{w}_{l} + \gamma_{l}^{\top}(h-H\hat{w}_{l}) \le s_{ml}\\
     &  \|H^{\top}_{w}\gamma_{l} - [[F]_n A_{cl}^{(i,j,m)} ]^{\top}\|_{*} \le \lambda_{m}\\
     & \gamma_{ml} \ge 0, \quad \forall l \in \mathbb{N}_{1}^{M}, \forall m \in \mathbb{N}_{0}^{i-1}.\\
     \end{aligned}\\
    \end{aligned} 
\end{equation}
Hence the feasible set $\mathbb{Z}_f$ is equivalent to 
 $$
\begin{aligned}
 \mathbb{Z}_{f} := \{&\mathbf{z} \mid \exists \lambda_m, s_{ml}, \gamma_{ml} \text{ s.t. } \mathbf{z} \text{ is feasible in } \\
 & \quad \begin{aligned}
 &\displaystyle  \sum_{m = 0}^{i-1}    \lambda \varepsilon + \frac{1}{N} \sum_{l=1}^{M}s_{ml} \le [f]_{n} - [F]_{n} z_{i \mid k}\\
     &   [F]_{n} A_{cl}^{(i,j,m)}\hat{w}_{l} + \gamma_{l}^{\top}(h-H\hat{w}_{l}) \le s_{ml}\\
     &  \|H^{\top}_{w}\gamma_{l} - [[F]_n A_{cl}^{(i,j,m)} ]^{\top}\|_{*} \le \lambda_{m}\\
     & \gamma_{ml} \ge 0, \quad \forall l \in \mathbb{N}_{1}^{M}, \forall m \in \mathbb{N}_{0}^{i-1}\\
    \end{aligned} \\
 & \forall i \in \mathbb{N}_{1}^{N}\}.
 \end{aligned}\\
 $$
 Together with the objective function and equality constraints corresponding to the nominal dynamic, we complete the proof.
\end{proof}

\begin{remark}
    All the results about the worst-case expected constraints in this paper can be easily replaced by the worst-case chance constraints via the CVaR formulation introduced by \cite{hota2019data, xie2021distributionally}.
    Also, this work can be easily extended to the control problem with polytopic input constraints, e.g. \cite{zhong2023tube}.
\end{remark}

\subsection{Iterative distributionally robust LQR}
Optimization problem \eqref{eq:iLQR_linearized_prob}
is difficult to solve as the matrices of the linearized system depend on the unknown nominal trajectory and the back-off $\beta$ is dependent on the unknown system matrices. Hence, we propose in the following an algorithm iterating by sequentially 
deriving the linearized system and the corresponding feedback gains via Riccati recursion \cite[Sec. 8.8.3]{rawlings2017model}, solving a perturbed nominal problem, and updating the perturbations based on the solution from a DRO problem. 

We consider below the setting of distributionally robust optimal control, i.e. the problem \eqref{eq:optimal_control_tightening} with the initial sampling time $k = 0$ and a fixed 
prediction horizon $N$. Given initial trajectories of nominal state and input $\bar{\boldsymbol{z}}$, $\bar{\boldsymbol{v}}$ (e.g. nominal nonlinear MPC), we solve an iterative LQR problem by Riccati Recursion \cite{li2004iterative, mayne1966second, rawlings2017model} to get the matrices $\mathbf{A},\mathbf{B},\mathbf{K}$ corresponding to the linearized system, where $\mathbf{A} = \{A_{0 \mid k},\dots, A_{N \mid k}\}$, $\mathbf{B} = \{B_{0 \mid k},\dots, B_{N \mid k}\}$, and $\mathbf{K} = \{K_{0 \mid k},\dots, K_{N \mid k}\}$. The feedback gain derived from Riccati recursion makes the closed-loop system matrix $A_{cl}$ stable locally around the linearization point.
Then, we solve \eqref{eq:DRO_constraint_tightening} to update the back-off $\mathbf{\beta} =  \{\beta_{0 \mid k},\dots, \beta_{N \mid k}\}$, where
    \begin{equation}
    \label{eq:beta}
\begin{aligned}
 \beta_{i \mid k} := \sup_{\mathbb{P}_{m} \in \mathcal{P}, m = 0, \dots, i-1} &\mathbb{E}_{\mathbb{P}^{\otimes i}}\{[F]_{n}(\sum_{m = 0}^{i-1}A_{cl}^{(i,j,m)} W_{m\mid k})\}.
 \end{aligned}
 \end{equation}
 After determining the back-off, we solve the constraint-tightened program
\begin{equation}
\label{eq:optimal_control_tightening_fixed_matirces}
\begin{array}{cl}
\displaystyle\min_{\mathbf{z}, \mathbf{v}} & \sum_{i=0}^{N-1}(\left\|z_{i \mid k}\right\|_{Q}^{2}+\left\|v_{i \mid k}\right\|_{R}^{2}) +\left\| z_{N \mid k}\right\|_{Q_f}^{2} \\
\text { s.t. } 
& z_{0\mid k}=x_{k}\\
& z_{i+1 \mid k}=f_d(z_{i \mid k}, v_{i \mid k}) \\
& v_{i \mid k}=K_{i\mid k} z_{i \mid k}+c_{i\mid k}\\
&[F]_{n}z_{i \mid k} \leq [f]_{n} - \beta_{i \mid k}, \\
& \forall \quad i \in \mathbb{N}_1^N, n \in \mathbb{N}_1^{n_F},  k = 0.\\ 
\end{array}
\end{equation}
We iterate the process above until convergence. The proposed algorithm is summarized in Algorithm \ref{alg:1}.

\begin{algorithm}[H]
 \caption{Iterative distributionally robust  LQR} \label{alg:1}
 \begin{algorithmic}[1]
   \State \textbf{Input}: Initial guess $\bar{\boldsymbol{z}}, \bar{\boldsymbol{v}}, \varepsilon, \{\hat{w}\}_{i=1}^{M}$
    \While{Not Converge}
    \State $\boldsymbol{A}, \boldsymbol{B}, \boldsymbol{K}
    \leftarrow$ Riccati Recursion $(\bar{\boldsymbol{z}}, \bar{\boldsymbol{v}})$ \cite[Sec. 8.8.3]{rawlings2017model}
    \State $\boldsymbol{\beta}$ $\leftarrow$ DRO ($\boldsymbol{A}, \boldsymbol{B}, \boldsymbol{K}, \varepsilon, \{\hat{w}\}_{i=1}^{M}$) \eqref{eq:DRO_constraint_tightening} 
  \State $\bar{\boldsymbol{z}}, \bar{\boldsymbol{v}},\bar{\boldsymbol{c}}$ $\leftarrow$ Nonlinear optimal control problem with fixed $\boldsymbol{A}, \boldsymbol{B}, \boldsymbol{K}, \boldsymbol{\beta}$ \eqref{eq:optimal_control_tightening_fixed_matirces} 
    \EndWhile
    \State Return: $\bar{\boldsymbol{z}}, \bar{\boldsymbol{v}}, \bar{\boldsymbol{c}}, \boldsymbol{K}$
 \end{algorithmic} 
 \end{algorithm}

\begin{remark}
For the original distributionally robust MPC problem \eqref{eq:iLQR_prototype_prob}, we could recursively solve Algorithm \ref{alg:1} at each sampling time $k$ for the given measurement $x_k$. The initial guess could be generated from a nominal NMPC or a shifted trajectory derived from the previous step.
\end{remark}

\subsection{Wasserstein distributional reachable sets for error dynamics}
\label{sec:err-reachable}
We now further show that the stochastic error characterized by the LTV dynamics 
\begin{equation}
   \small
    \label{eq-error-dynamics-ltv}
    e_{i+1 }=A_{cl, i} e_{i }+ W_{i}, \  W_{i}\overset{\textrm{i.i.d.}}{\sim} \mathbb{P}_w
\end{equation}
can be contained in a dynamic Wasserstein ambiguity set given below.
This gives a theoretical bound for the experimental results in Figure~\ref{fig:iterative_LQR_error_prop}.

Let $\hat e_i\sim \hat Q_{N,i}$ be the empirical error vector at predicted time step $i$,
and
$ \hat Q_{N,i}$ be its empirical error distribution, given by the empirical LTV error dynamics
$$
    \small
    \hat e_{i+1 }=A_{cl, i} \hat e_{i }+ \hat W_{i}, \ 
    e_{0 } = 0,
    \hat W_{i}\overset{\textrm{i.i.d.}}{\sim} \hat{\mathbb{P}}
    .
$$
We now consider a dynamic ambiguity set\textemdash\emph{Wasserstein ambiguity tube} associated with the LTV dynamics,
with a slight abuse of notation,
$$
\small
\begin{aligned}
    & \mathbb{T}_\epsilon(\hat P_N)
    :=
        \Bigl\{
            Q_i, i = 0,\dots, T
            \ 
            \vert
            \
            e_{i }\sim Q_i,\
            e_{i+1 }=A_{cl, i} e_{i }+ W_{i},
             W_{i}\sim \mathbb{P}_{w},
            d_w^{p}\left(\mathbb{P}_w, \hat {\mathbb{P}}\right)\leq \epsilon,
            e_{0 } = 0
        \Bigr\}
    .
    \end{aligned}
    $$

The intuition is that the Wasserstein ambiguity tube contains all evolution paths of the ambiguous stochastic system, i.e., the state distribution of our MPC problem lives in this ambiguity tube
$\{Q_i\}\in\mathbb{T}_\epsilon(\hat {\mathbb{P}})$.

The following result characterizes the size of the Wasserstein ambiguity tube.
\begin{proposition}
    [Wasserstein distributional reachable sets]
    \label{prop-error-reachable-set}
    We have,
    $\forall \{Q_i\}\in\mathbb{T}_\epsilon(\hat {\mathbb{P}})$,
    $$
        \small
        \begin{aligned}
        & d_w^{p}\left({Q_i}, \hat Q_{N,i}\right)
             \leq 
            \epsilon \cdot  
            \sum_{m = 0}^{i-1}\prod_{j=0}^{i-1-m} 
             \|A_{cl,i-1-j}^{\min\{1,i-1-m-j\}}\|^p, \\
         \end{aligned}
         $$
    $\textrm{ for } i=0,\dots, T$, where $\|\cdot \|$ is the corresponding induced matrix norm.
\end{proposition}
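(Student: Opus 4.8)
The plan is to exploit the error decomposition \eqref{eq-error-decomposition} to realize both $Q_i$ and $\hat Q_{N,i}$ as pushforwards of product disturbance laws under a \emph{common, deterministic} linear map, and then to lift an optimal disturbance coupling to a coupling of the error laws. Write $\Phi_{i,m}:=A_{cl}^{(i,j,m)}=\prod_{j=0}^{i-1-m}A_{cl,i-1-j}^{\min\{1,i-1-m-j\}}$ and $L_i(w_0,\dots,w_{i-1}):=\sum_{m=0}^{i-1}\Phi_{i,m}w_m$. The crucial point is that the closed-loop matrices $A_{cl,i}$ (hence the $\Phi_{i,m}$ and the map $L_i$) are fixed by the nominal trajectory and do not depend on the disturbance realizations; thus \eqref{eq-error-decomposition} gives $e_i=L_i(W_0,\dots,W_{i-1})$ and $\hat e_i=L_i(\hat W_0,\dots,\hat W_{i-1})$, i.e. $Q_i=(L_i)_\#\mathbb{P}_w^{\otimes i}$ and $\hat Q_{N,i}=(L_i)_\#\hat{\mathbb{P}}^{\otimes i}$ for the \emph{same} $L_i$.

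First I would fix any $\mathbb{P}_w$ with $d_w^p(\mathbb{P}_w,\hat{\mathbb{P}})\le\epsilon$ and pick an optimal coupling $\gamma^\star$ of $\mathbb{P}_w$ and $\hat{\mathbb{P}}$ attaining $\int\|w-\hat w\|^p\,\gamma^\star(\mathrm dw,\mathrm d\hat w)\le\epsilon$. Since the disturbances are i.i.d.\ and independent across prediction steps, the product $\Gamma:=(\gamma^\star)^{\otimes i}$ is a valid coupling of $\mathbb{P}_w^{\otimes i}$ and $\hat{\mathbb{P}}^{\otimes i}$. Pushing $\Gamma$ through $(L_i,L_i)$ yields $\pi_i:=(L_i\times L_i)_\#\Gamma$, whose marginals are exactly $Q_i$ and $\hat Q_{N,i}$; hence $\pi_i\in\Pi(Q_i,\hat Q_{N,i})$ is admissible in Definition \ref{def:Wassertein_metric}. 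Using this particular coupling as an upper bound gives $d_w^p(Q_i,\hat Q_{N,i})\le\mathbb{E}_\Gamma\big\|\sum_{m=0}^{i-1}\Phi_{i,m}(W_m-\hat W_m)\big\|^p$.

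Next I would bound the integrand by the triangle inequality and submultiplicativity of the induced norm, $\big\|\sum_m\Phi_{i,m}(W_m-\hat W_m)\big\|\le\sum_m\|\Phi_{i,m}\|\,\|W_m-\hat W_m\|\le\sum_m C_{i,m}\,\|W_m-\hat W_m\|$, where $C_{i,m}:=\prod_{j=0}^{i-1-m}\|A_{cl,i-1-j}^{\min\{1,i-1-m-j\}}\|\ge\|\Phi_{i,m}\|$ is exactly the product of norms on the right-hand side of the claim. For the type-$1$ case $p=1$ used throughout the paper, taking expectations and using that each pair $(W_m,\hat W_m)$ has law $\gamma^\star$, so $\mathbb{E}\|W_m-\hat W_m\|=\int\|w-\hat w\|\,\gamma^\star\le\epsilon$, immediately yields $d_w(Q_i,\hat Q_{N,i})\le\epsilon\sum_{m=0}^{i-1}C_{i,m}$, which is the stated bound. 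The base case $i=0$ is the empty sum with $Q_0=\hat Q_{N,0}=\delta_0$.

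The main obstacle is the interchange of the summation with the $p$-th power when $p>1$, since $(\sum_m a_m)^p\neq\sum_m a_m^p$ in general. For $p=1$ the two coincide and the argument above is exact; for $p>1$ one instead applies Minkowski's inequality in $L^p(\Gamma)$, giving $(d_w^p(Q_i,\hat Q_{N,i}))^{1/p}\le\epsilon^{1/p}\sum_m C_{i,m}$, i.e. the power sits outside the sum, coinciding with the stated form exactly at $p=1$. As a cross-check I would also verify the statement by induction on $i$ via the one-step recursion $e_{i+1}=A_{cl,i}e_i+W_i$: coupling $(e_i,\hat e_i)$ optimally and $(W_i,\hat W_i)$ through $\gamma^\star$ independently produces a coupling of $Q_{i+1},\hat Q_{N,i+1}$, and the triangle inequality gives $d_w(Q_{i+1},\hat Q_{N,i+1})\le\|A_{cl,i}\|\,d_w(Q_i,\hat Q_{N,i})+\epsilon$; unrolling this recursion reproduces the closed form once one checks the routine identities $C_{i+1,m}=\|A_{cl,i}\|\,C_{i,m}$ for $m\le i-1$ and $C_{i+1,i}=1$.
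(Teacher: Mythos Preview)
Your proposal is correct and follows essentially the same route as the paper: both arguments realize $Q_i$ and $\hat Q_{N,i}$ via the error decomposition \eqref{eq-error-decomposition}, build a coupling of the error laws from a product coupling $\Gamma$ of the disturbance laws, and then bound the transport cost by the triangle inequality plus submultiplicativity of the induced norm together with $d_w^p(\mathbb{P}_w,\hat{\mathbb{P}})\le\epsilon$. Your treatment is in fact more careful: you make explicit the pushforward construction $\pi_i=(L_i\times L_i)_\#\Gamma$ that justifies why $\Gamma$ yields an admissible coupling of $Q_i$ and $\hat Q_{N,i}$, and you correctly flag that for $p>1$ the step $\big\|\sum_m\Phi_{i,m}(W_m-\hat W_m)\big\|^p\le\sum_m\|\Phi_{i,m}\|^p\|W_m-\hat W_m\|^p$ fails and only the Minkowski version survives, whereas the paper writes the general-$p$ inequality without comment (consistent with its standing restriction to $p=1$).
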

\begin{proof}
    By the definition of the Wasserstein distance,
    $$
    \small
        d^{p}_{w}\left({Q_i}, \hat Q_{N,i}\right) =\inf_{\Pi} \int_{\mathbb{W}_{w}^{2}}
        \left\|
            e^*_i-\hat e_i
        \right\|^{p} 
        \mathrm{d} \Pi\left( e^*_i, \hat e_i\right)
        ,
    $$
    where $\Pi$ is a joint distribution (transport plan) with marginals ${Q_i}, \hat Q_{N,i}$.

    Plugging in the error dynamics decomposition~\eqref{eq-error-decomposition},
    $$
    \small
    \begin{aligned}
        &\inf_\Gamma
        \int_{\mathbb{W}_{w}^{2}}
            \left\|
                \sum_{m = 0}^{i-1}\prod_{j=0}^{i-1-m} A_{cl,i-1-j}^{\min\{1,i-1-m-j\}}(W^*_{m} - \hat W_{m})
            \right\|^{p}  \mathrm{d} {\Gamma}\left( W^*_{m}, \hat W_{m}\right)
            \\
           & \leq
            \sum_{m = 0}^{i-1}
            \left\|
            \prod_{j=0}^{i-1-m} 
                A_{cl,i-1-j}^{\min\{1,i-1-m-j\}}
                \right\|
                ^{p} 
            \cdot \inf_\Gamma
            \int_{\mathbb{W}_{w}^{2}}
            \left\|
                W^*_{m} - \hat W_{m}
            \right\|^{p} 
            \mathrm{d} \Gamma\left( W^*_{m}, \hat W_{m}\right)
            \\
          &  \leq 
            \epsilon
            \cdot 
            \sum_{m = 0}^{i-1}\prod_{j=0}^{i-1-m} 
            \left\|
                A_{cl,i-1-j}^{\min\{1,i-1-m-j\}}
            \right\|^{p} 
            ,
            \end{aligned}
            $$
        where $\Gamma$ is the joint distribution of $W^*_{m}, \hat W_{m}$.
        Due to the dynamics structure~\eqref{eq-error-dynamics-ltv}, the joint distribution $\Pi$ is determined by the joint distributions $\Gamma$.
        The last inequality estimate above is due to the Wasserstein distance estimate
        $d^{p}_{w}\left(\mathbb{P}_w, \hat {\mathbb{P}}\right)\leq \epsilon$.
\end{proof}
Proposition~\ref{prop-error-reachable-set} equips us with a reachable set for the error in the Wasserstein distance. We illustrate this in a Figure~\ref{fig:iterative_LQR_error_prop}.
It further implies that the aforementioned ambiguity tube can be bounded in a more straightforward and computable dynamic ambiguity set (tube)
\begin{equation}
\small
\begin{aligned}
    \mathbb{T}_\epsilon(\hat {\mathbb{P}})
    \subset
    \Bigl\{&
        Q_i, i = 0,\dots, T
        \ 
        \vert
        \
    d^{p}_{w}\left({Q_i}, \hat Q_{N,i}\right)
        \leq 
        \epsilon \cdot  
        \sum_{m = 0}^{i-1}\prod_{j=0}^{i-1-m} 
         \|A_{cl,i-1-j}^{\min\{1,i-1-m-j\}}\|^p,
         \\ 
     &    \hat e_{i }\sim \hat Q_{N,i},\ 
         \hat e_{i+1 }=A_{cl,i} \hat e_{i }+ \hat W_{i}, \ 
         \hat W_{i}\overset{\textrm{i.i.d.}}{\sim} \hat {\mathbb{P}},
         e_{0 } = 0.
    \Bigr\}
    \
    \end{aligned}
\end{equation}    
Unlike aforementioned works in the existing literature where the ambiguity sets are often given a priori, our dynamic Wasserstein ambiguity set is obtained by propagating through the LTV error dynamics.

\begin{remark}
    [Wasserstein invariant ambiguity sets]
    While this paper does not deal with infinite-horizon control or positive invariant sets, it is easy to see that Proposition~\ref{prop-error-reachable-set} can be used to construct Wasserstein invariant ambiguity sets by examining the series 
    $\sum_{m = 0}^{\infty}\prod_{j=0}^{m} \|A_{cl,i-1-j}^{\min\{1,i-1-m-j\}}\|^p$, i.e.,
    if $\exists C <\infty$ such that $\sum_{m = 0}^{\infty}\prod_{j=0}^{m} \|A_{cl,i-1-j}^{\min\{1,i-1-m-j\}}\|^p \leq C$, then the following set of distributions is a Wasserstein invariant set for the ambiguous system state distribution
    $
    \Bigl\{
            Q
            \
            \vert
            \
            d^{p}_{w}\left({Q}, \hat Q_{N,\infty}\right)
            \leq 
            \epsilon \cdot C
        \Bigr\}
    $,
    where $\hat Q_{N,\infty}$ is the equilibrium state distribution of the nominal error dynamics~\eqref{eq-error-dynamics-ltv}.
\end{remark}

\subsection{Linearization error reachable sets}
\label{subset:linear_error_prop}
Let us now consider the error dynamics propagation for linearization errors with the following standard assumption.
\begin{assumption}
    \label{assump:3_times_diff_append2}
    The nonlinear dynamics \eqref{eq:system} $f_d: \mathbb{R}^{n_{\mathrm{x}}} \times$ $\mathbb{R}^{n_{\mathrm{u}}} \mapsto \mathbb{R}^{n_{\mathrm{x}}}$ are three times continuously differentiable.
\end{assumption}
To bound the linearization error, we consider the following condition of locally bounded eigenvalues on Hessian, similar as in \cite{leeman2023robust}. Let $\mathcal{X} \in \mathbb{R}^{n_x}$ and $\mathcal{U} \in \mathbb{R}^{n_u}$ denote the state and input space, respectively and  $H_n: \mathbb{R}^{n_{\mathrm{x}}+n_{\mathrm{u}}} \mapsto \mathbb{R}^{\left(n_{\mathrm{x}}+n_{\mathrm{u}}\right) \times\left(n_{\mathrm{x}}+n_{\mathrm{u}}\right)}$ denote the Hessian corresponding to the n-th component of $f_d$, i.e.
$$
H_n(\xi_x, \xi_u)=\left.\left[\begin{array}{cc}\frac{\partial^2 f_{d,n}}{\partial x^2} & \frac{\partial^2 f_{d,n}}{\partial x \partial u} \\ * & \frac{\partial^2 f_{d,n}}{\partial u^2}\end{array}\right]\right|_{(x, u)=(\xi_x, \xi_u)},
$$
where $\xi_x \in \mathcal{X}$ and $\xi_u \in \mathcal{U}$. We further denote the constant $\mu_{n}$ as the corresponding locally maximal eigenvalue, i.e. 
$$
\mu_n:=\frac{1}{2} \max _{\xi_x \in \mathcal{X},\xi_u \in \mathcal{U},\|h\|_{\infty} \leq 1}\left|h^{\top} H_n(\xi_x, \xi_u) h\right|. 
$$

Then we have the following bound for each n-th element of the vector of linearization errors $r(x,u,z,v)$ in \eqref{eq:linearized_with_error_apped2}.
\begin{lemma}\cite[Proposition III.1.]{leeman2023robust}
\label{lemma:ri_bound_append2}
Given Assumption \ref{assump:3_times_diff_append2}, the remainder in \eqref{eq:linearized_with_error_apped2} satisfies
$$
\left|r_n(x, u, z, v)\right| \leq\|\eta\|_{\infty}^2 \mu_n,
$$
for any $z, x \in \mathcal{X}, u, v \in \mathcal{U}$, where $\eta = \begin{bmatrix}
    \Delta x\\
    \Delta u
\end{bmatrix}$.
\end{lemma}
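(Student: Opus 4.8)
The plan is to recognize $r_n$ as precisely the second-order Lagrange remainder of the scalar function $f_{d,n}$ expanded about the nominal point $(z,v)$ and evaluated at $(x,u)$, and then to bound the resulting quadratic form using the definition of $\mu_n$. From the defining relation \eqref{eq:linearized_with_error_apped2}, restricted to its $n$-th component,
\[
r_n(x,u,z,v) = f_{d,n}(x,u) - f_{d,n}(z,v) - [A(z,v)]_n (x-z) - [B(z,v)]_n (u-v),
\]
which is exactly $f_{d,n}$ at $(x,u)$ minus its first-order Taylor polynomial at $(z,v)$, since $[A]_n$ and $[B]_n$ are the $n$-th rows of the Jacobians $\partial f_d/\partial x$ and $\partial f_d/\partial u$.

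First I would invoke the multivariate Taylor theorem with Lagrange remainder. Under Assumption \ref{assump:3_times_diff_append2}, $f_{d,n}$ is in particular twice continuously differentiable, so there exists a point $(\xi_x,\xi_u)$ on the line segment joining $(z,v)$ and $(x,u)$ with
\[
r_n(x,u,z,v) = \tfrac{1}{2}\, \eta^\top H_n(\xi_x,\xi_u)\, \eta, \qquad \eta = \begin{bmatrix} \Delta x \\ \Delta u \end{bmatrix}.
\]
Next, I would exploit the homogeneity of the quadratic form. If $\eta = 0$ the claim is trivial; otherwise I set $h := \eta/\|\eta\|_\infty$ so that $\|h\|_\infty = 1$, giving
\[
|r_n(x,u,z,v)| = \tfrac{1}{2}\,\|\eta\|_\infty^2\, \bigl| h^\top H_n(\xi_x,\xi_u)\, h \bigr| \le \tfrac{1}{2}\,\|\eta\|_\infty^2 \max_{\substack{\xi_x\in\mathcal{X},\,\xi_u\in\mathcal{U} \\ \|h\|_\infty\le 1}} \bigl| h^\top H_n(\xi_x,\xi_u)\, h \bigr| = \|\eta\|_\infty^2\, \mu_n,
\]
where the final equality is just the definition of $\mu_n$, with its factor $\tfrac{1}{2}$ absorbed into the estimate.

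The main obstacle is a domain subtlety rather than a computational one: for the displayed bound to be valid, the intermediate point $(\xi_x,\xi_u)$ furnished by the mean-value form of the remainder must lie inside $\mathcal{X}\times\mathcal{U}$, the set over which $\mu_n$ is maximized. This holds provided $\mathcal{X}\times\mathcal{U}$ is convex, or at least contains the segment between the nominal and real quantities, which is the implicit standing assumption on the state and input spaces; I would state this explicitly. A secondary point worth flagging is that this lemma only requires $C^2$ regularity, so the full $C^3$ hypothesis of Assumption \ref{assump:3_times_diff_append2} is stronger than what is strictly needed here — the extra derivative is presumably reserved for the sharper reachable-set estimates developed elsewhere.
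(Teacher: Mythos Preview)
Your proof is correct and follows essentially the same route as the paper's own argument: both identify $r_n$ as the second-order Lagrange remainder via the mean-value form, obtain the quadratic form $\tfrac12\,\eta^\top H_n(\xi_x,\xi_u)\,\eta$, and then bound it by factoring out $\|\eta\|_\infty^2$ and invoking the definition of $\mu_n$. Your added remarks on the convexity requirement for $(\xi_x,\xi_u)\in\mathcal{X}\times\mathcal{U}$ and on the sufficiency of $C^2$ regularity are accurate refinements that the paper leaves implicit.
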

\begin{proof}
    By second-order Mean Value Theorem, we know that there exist $\xi_x \in [x,z]$ and $\xi_u \in [u,v]$ (with a little abuse of notation) such that $r_n(x, u, z, v) = 
\frac{1}{2}\begin{bmatrix}
    x-z \\ u-v 
\end{bmatrix}^T H_n(\xi_x, \xi_u)\begin{bmatrix}
    x-z \\ u-v 
\end{bmatrix}$. Hence we have 
$$
\begin{aligned}
\left|r_n(x, u, z, v)\right| &\le \max_{\xi_x, \xi_u} \frac{1}{2} \left| \begin{bmatrix}
    \Delta x \\  \Delta u 
\end{bmatrix}^T H_n(\xi_x, \xi_u)\begin{bmatrix}
    \Delta x \\ \Delta u 
\end{bmatrix} \right|\\
& \le \left\| \begin{bmatrix}
    \Delta x \\  \Delta u 
\end{bmatrix} \right\|^2 \max_{\xi_x \in \mathcal{X}, \xi_u \in \mathcal{U}, \|h \| \le 1} \frac{1}{2} \left| h^T H_n(\xi_x, \xi_u) h\right|.\\
\end{aligned}
$$
If the infinity norm is considered here, we have $
\left|r_n(x, u, z, v)\right| \leq\|\eta\|_{\infty}^2 \mu_n
$.
\end{proof}

Given Lemma \ref{lemma:ri_bound_append2},  dynamics \eqref{eq:error_dynamic_append2} and the control policy, we have the following lemma characterizing the upper bound of the linearization error.

\begin{lemma}
    \label{lemma: error_dynamic_upper_bound}
    Given Assumption \ref{assump:3_times_diff_append2}, the remainder in \eqref{eq:linearized_with_error_apped2} is upper bounded by
    \begin{equation} 
\begin{aligned}
    r_{n}(x_{k+i},u_{k+i},z_{k+i},v_{k+i}) \le \left(\left\|\begin{bmatrix}
        e_{k+i} \\ K_{k+i} e_{k+i}
    \end{bmatrix} \right\|_{\infty} + \left\|
    \begin{bmatrix}
    \varepsilon_{k+i} \\
    K_{k+i}\varepsilon_{k+i} \end{bmatrix}\right\|_{\infty}\right)^{2}\mu_n, 
    \end{aligned}
    \end{equation}
for any $z, x \in \mathcal{X}, u, v \in \mathcal{U}$, where $i \in  \mathbb{N}_{+}$ and $n \in \mathbb{N}_{1}^{n_x}$.
\end{lemma}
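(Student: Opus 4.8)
The plan is to reduce the statement to the single-point bound of Lemma~\ref{lemma:ri_bound_append2} and then substitute the additive error splitting established in~\eqref{eq:error_dynamic_append2}. First I would recall that Lemma~\ref{lemma:ri_bound_append2} already gives, for the $n$-th remainder component, $|r_n(x,u,z,v)| \le \|\eta\|_\infty^2\,\mu_n$ with the stacked error $\eta = [\Delta x^\top,\ \Delta u^\top]^\top$, where $\Delta x = x-z$ and $\Delta u = u-v$. Everything therefore comes down to expressing $\eta$ in terms of the accumulated disturbance error $e_{i\mid k}$ and the accumulated linearization error $\varepsilon_{i\mid k}$.

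The key step is to rewrite the input error in feedback form. Subtracting the nominal policy~\eqref{eq:control_law_nominal} from the realized policy~\eqref{eq:control_law} cancels the offset $c_{i\mid k}$ and yields $\Delta u_{i\mid k}=u_{i\mid k}-v_{i\mid k}=K_{i\mid k}(x_{i\mid k}-z_{i\mid k})=K_{i\mid k}\,\Delta x_{i\mid k}$. Combining this with the decomposition $\Delta x_{i\mid k}=e_{i\mid k}+\varepsilon_{i\mid k}$ from~\eqref{eq:error_dynamic_append2}, the stacked error separates into a disturbance part and a linearization part,
$$
\eta
= \begin{bmatrix} \Delta x_{i\mid k} \\ K_{i\mid k}\,\Delta x_{i\mid k} \end{bmatrix}
= \begin{bmatrix} e_{i\mid k} \\ K_{i\mid k}\,e_{i\mid k} \end{bmatrix}
+ \begin{bmatrix} \varepsilon_{i\mid k} \\ K_{i\mid k}\,\varepsilon_{i\mid k} \end{bmatrix}.
$$

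Finally I would apply the triangle inequality for $\|\cdot\|_\infty$ to this sum and square, using that both summand norms are nonnegative, to obtain $\|\eta\|_\infty^2 \le \bigl(\,\| [\,e_{i\mid k}^\top,\ (K_{i\mid k}e_{i\mid k})^\top]^\top \|_\infty + \| [\,\varepsilon_{i\mid k}^\top,\ (K_{i\mid k}\varepsilon_{i\mid k})^\top]^\top \|_\infty\bigr)^2$; multiplying by $\mu_n$ and chaining with Lemma~\ref{lemma:ri_bound_append2} (together with $r_n\le|r_n|$) gives exactly the claimed bound. I do not expect a genuine obstacle here: the argument is elementary once $\Delta u = K\,\Delta x$ is recognized. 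The only point needing mild care is index bookkeeping — the lemma is phrased with the closed-loop indexing $k+i$ while the decomposition~\eqref{eq:error_dynamic_append2} and the policies use the prediction indexing $i\mid k$ — but these denote the same quantities under the identification $W_{i\mid k}=W_{k+i}$, so the translation is purely notational.
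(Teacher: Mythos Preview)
Your proposal is correct and follows exactly the approach the paper intends: the paper's own proof is the single sentence ``This can be directly derived from Lemma~\ref{lemma:ri_bound_append2} with the error dynamics~\eqref{eq:error_dynamic_append2},'' and your write-up simply unpacks that derivation (using $\Delta u = K\Delta x$, the splitting $\Delta x = e+\varepsilon$, and the triangle inequality). Your remark about the $k+i$ versus $i\mid k$ indexing is also apt and is the only bookkeeping subtlety.
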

\begin{proof}
    This can be directly derived from Lemma \ref{lemma:ri_bound_append2} with the error dynamics \eqref{eq:error_dynamic_append2}.
\end{proof}

Furthermore, let $
\mu:=\operatorname{diag}\left(\mu_1, \ldots, \mu_{n_{\mathrm{x}}}\right)$, then the linearization error satisfies
$$
\begin{aligned}
r_{k + i} \in \left(\left\|\begin{bmatrix}
        e_{k+i} \\ K_{k+i} e_{k+i}
    \end{bmatrix} \right\|_{\infty} + \left\|
    \begin{bmatrix}
    \varepsilon_{k+i} \\
    K_{k+i}\varepsilon_{k+i} \end{bmatrix}\right\|_{\infty}\right)^{2} \mu  \mathbb{B}_{\infty}^{n_{\mathrm{x}}},
\end{aligned}
$$
where $r_{k+i} := \begin{bmatrix}
    r_{1}(x_{k+i}, u_{k+i}, z_{k+i}, v_{k+i})\\
    \vdots\\
    r_{n_x}(x_{k+i}, u_{k+i}, z_{k+i}, v_{k+i})
\end{bmatrix}$. 


\begin{remark}
    The back-off due to the linearization error can be derived from Lemma \ref{lemma: error_dynamic_upper_bound}. 
    However, we observed that, in practical experiments, the linearization error propagation might result in an over-conservative closed-loop performance; hence, we will only consider the dynamic propagation of ambiguity sets in practical numerical experiments below.
\end{remark}

\section{Case study}
The system considered is a nonlinear mass spring system  with $m = 2\,\text{kg}, k_1 = 3\, \text{N/m}, k_2 = 2 \,\text{N/m}$:
$$
\begin{aligned}
\dot{x}_{1} &= x_{2}\\
\dot{x}_{2} &= -\frac{k_{2}}{m} x_{1}^{5} -\frac{k_{1}}{m} x_{2}+\frac{1}{m} u.
\end{aligned}
$$
The discrete-time system is acquired by using the Runge-Kutta method with fourth order with the sampling period $0.1\,\text{s}$. We simulate the control performance for the discrete-time system suffering from the uniformly distributed additive disturbance bounded within $[-1e-3, 1e-3]$ on the state element $x_1$, and $[-0.1, 0.1]$ on $x_2$. The prediction horizon for this system is set to $N = 140$. 

The control goal of this system is to track the state $x_r = [0, 0]^{\top}$ starting from the initial state $x_{init} = [-2,0]^{\top}$, while satisfying the distributionally robust state constraint corresponding to  $x_2 \le 0.5\,\text{m/s}$. The parameters are selected as $Q = Q_f =  \begin{bmatrix}
100 & 0 \\
0 & 1
\end{bmatrix}$, $R = [1]$. We compare the closed-loop performance of three different methods: our method, fixed feedback gain, and no feedback gain in this section. The fixed feedback gain $K=[-7.97, -7.16]$ is derived from the LQR controller for the nonlinear system linearized around the equilibrium point with the same penalty matrices. 

With $M=5$ offline collected disturbance samples and ball radius $\varepsilon = 0.03$, simulation results of nominal trajectories solved by Algorithm \ref{alg:1} with the three different methods mentioned above (for the method with the fix or zero feedback gain we only linearize the nominal nonlinear system without update $K$) can be found in fig. \ref{fig:iterative_LQR}. We apply $u_{k} = K_{k} x_{k} + c_{k}$ to the disturbed nonlinear system from $k=0$ to $k = N$, where $K_k = 0$ for the method with zero feedback gain and $K_k = [-7.97, -7.16]$ for the method with fixed feedback gain. Fig \ref{fig:iterative_LQR_multiple_realization} illustrates 20 realizations of the closed-loop performance. Fig. \ref{fig:iterative_LQR_error_prop} illustrates the accumulated error between the closed-loop state and the nominal state shown in fig. \ref{fig:iterative_LQR}.  The arrow indicates the error difference between two consecutive sampling times, i.e. the tail indicates the accumulated error with respect to the nominal state of the previous steps and the head indicates the accumulated error with respect to the nominal state at the current step.

Based on our experiments, we conclude that though the open-loop controller is capable of controlling the nominal trajectory,  it is not effective in reducing the growth of ambiguity, as depicted in Fig \ref{fig:iterative_LQR_error_prop}. Conversely, our feedback controller successfully controls the size of ambiguity, which is the main insight of our paper.


\begin{figure}[thpb]
  \centering
\includegraphics[width=0.4\textwidth, height = 0.2\textheight]{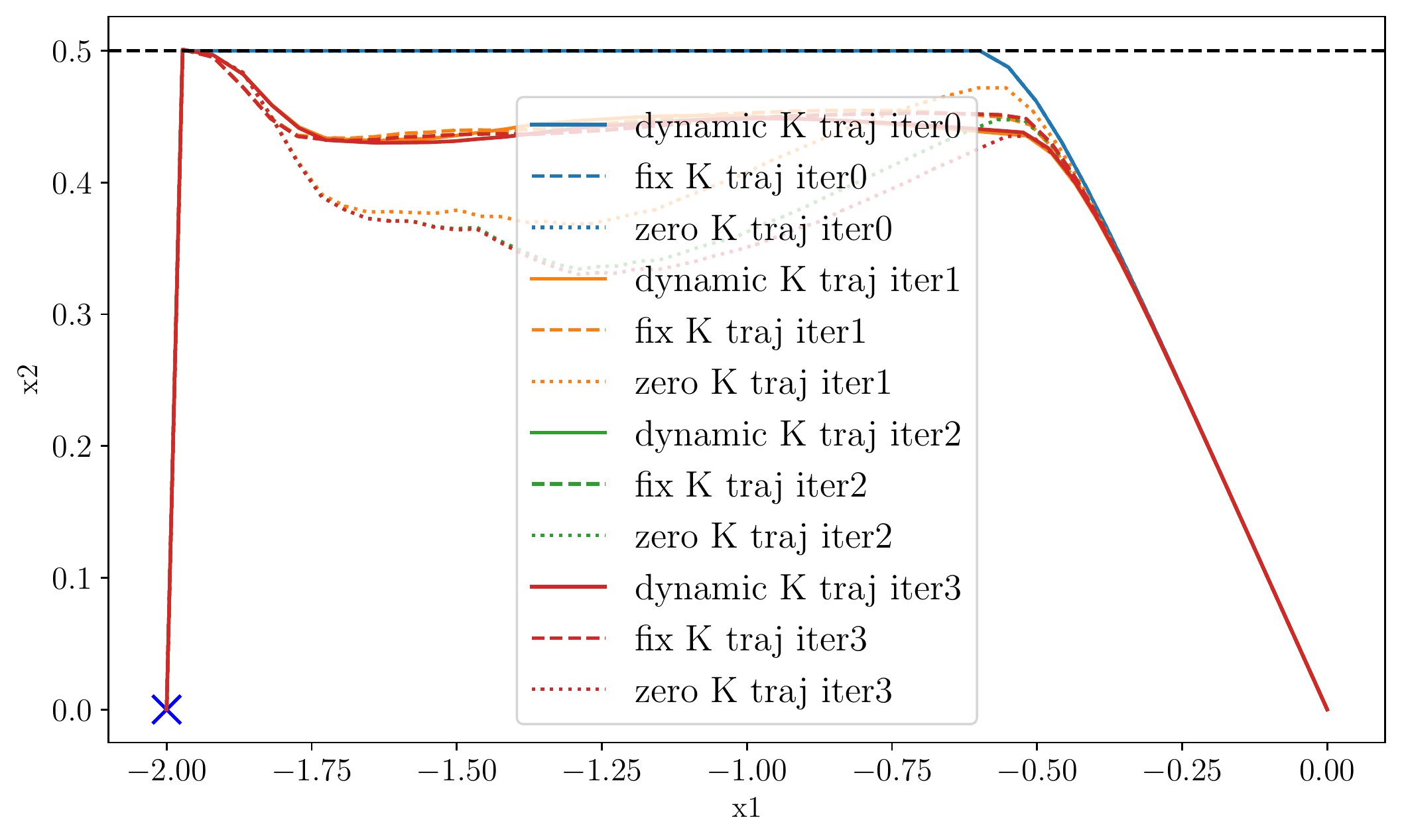}
    \caption{Nominal state trajectories solved by Algorithm \ref{alg:1} with Wasserstein ball radius $\varepsilon = 0.03$.}
    \label{fig:iterative_LQR}
\end{figure}    
\begin{figure}[thpb]
  \centering
\includegraphics[width=0.4\textwidth, height = 0.2\textheight]{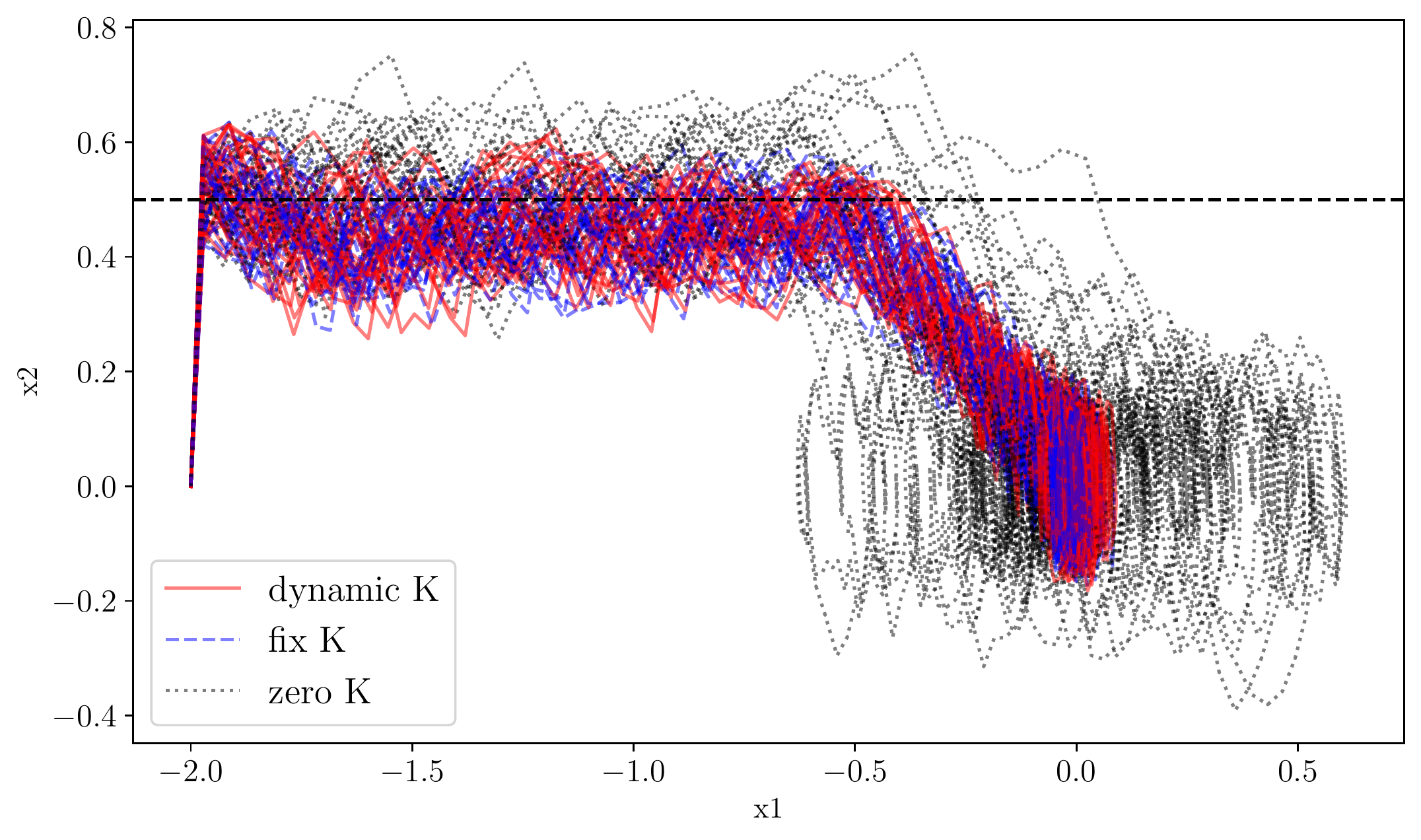}
    \caption{Closed-loop performance of 30 realizations with the feedback gains and nominal inputs solved by \ref{alg:1}. Red: Out method. Blue: Fixed feedback gain. Black: Zero feedback gain.} \label{fig:iterative_LQR_multiple_realization}
\end{figure}

\section{Conclusions}
Our key insight is that the sizes of Wasserstein ambiguity sets for nonlinear systems can be controlled using nonlinear feedback control.
To demonstrate that, this paper proposes the DRNMPC with dynamic Wasserstein ambiguity. We propose an iterative MPC scheme to dynamically control the propagation of Wasserstein ambiguity sets. We analytically characterize the Wasserstein distributional reachable set under dynamic propagation in our algorithm. To evaluate the effectiveness of our proposed algorithm, we compare the closed-loop performances of dynamic feedback, fixed feedback, and no feedback on a mass-spring system. The simulation results demonstrate that the proposed iterative scheme can effectively control the ambiguity set propagation, which is a critical step in solving the DRNMPC problem. 


\bibliographystyle{unsrt}  
\bibliography{root}

\end{document}